\newcommand{\rd}{\,\mathrm{d}}
\numberwithin{equation}{section}
\newtheorem{theorem}{Theorem}[section]
\newtheorem{lemma}[theorem]{Lemma}
\newtheorem{corollary}[theorem]{Corollary}
\newtheorem{proposition}[theorem]{Proposition}
\newtheorem{remark}[theorem]{Remark}
\def\bx{{\bf x}}
\def\by{{\bf y}}
\def\bv{{\bf v}}
\def\cM{\mathcal{M}}
\def\cP{\mathcal{P}}
\def\cQ{\mathcal{Q}}
\def\supp{\textnormal{supp\,}}
\def\dist{\textnormal{dist\,}}
\begin{document}

\title{A family of explicit minimizers for interaction energies}

\author{ Ruiwen Shu\footnote{Department of Mathematics, University of Georgia, Athens, GA 30602 (ruiwen.shu@uga.edu).}}

\maketitle

\abstract{In this paper we consider the minimizers of the interaction energies with the power-law interaction potentials $W(\bx) = \frac{|\bx|^a}{a} - \frac{|\bx|^b}{b}$ in $d$ dimensions. For odd $d$ with $(a,b)=(3,2-d)$ and even $d$ with $(a,b)=(3,1-d)$, we give the explicit formula for the unique energy minimizer up to translation. For the odd dimensions, the key observation is that successive Laplacian of the Euler-Lagrange condition gives a local partial differential equation for the minimizer. For the even dimensions $d$, the minimizer is given as the projection and rescaling of the previously constructed minimizer in dimension $d+1$ via a new lemma on dimension reduction.}

\section{Introduction}

In this paper we consider the interaction energy
\begin{equation}\label{E}
	E[\rho] = \frac{1}{2}\int_{\mathbb{R}^d}\int_{\mathbb{R}^d}W(\bx-\by)\rd{\rho(\by)}\rd{\rho(\bx)}\,,
\end{equation}
where $\rho\in \cM(\mathbb{R}^d)$ is a probability measure. $W:\mathbb{R}^d\rightarrow \mathbb{R}\cup\{\infty\}$ is the attractive-repulsive power-law interaction potential given by
\begin{equation}\label{Wab}
	W(\bx) = \frac{|\bx|^a}{a} - \frac{|\bx|^b}{b}\,,
\end{equation}
where $-d < b < a$. Here we adopt the tradition that $\frac{|\bx|^0}{0}$ is understood as $\ln|\bx|$.

The existence of minimizers of $E$ with $W$ given by \eqref{Wab} is known \cite{SST15,CCP15,CFT15}. The uniqueness of minimizers is also studied by many recent works \cite{Lop19,CS21,DLM1,DLM2,Fra}, mainly using convexity theory. We say $W$ has the linear interpolation convexity (LIC), if for any distinct compactly-supported $\rho_0,\rho_1\in\cM(\mathbb{R}^d)$ such that $\int_{\mathbb{R}^d}\bx\rho_0(\bx)\rd{\bx}=\int_{\mathbb{R}^d}\bx\rho_1(\bx)\rd{\bx}$, the function $t\mapsto E [(1-t)\rho_0+t\rho_1]$ is strictly convex on $t\in [0,1]$. For LIC potentials, the energy minimizer is necessarily unique (up to translation, which will be omitted later when it is clear from the context), and the LIC property is closely related to the positivity of $\hat{W}$, the Fourier transform of $W$. Also, if $W$ is radial and LIC, then its minimizer is necessarily radially symmetric. For the power-law potentials \eqref{Wab}, it is known \cite{Lop19,CS21,Fra} that LIC holds when $-d<b\le 2,\,2\le a \le 4,\,b<a$ except for $(a,b)=(4,2)$ which satisfies a non-strict version of LIC \cite{DLM2}.

Convexity theory is also crucial for deriving the explicit formula for minimizers. In fact, the Euler-Lagrange condition for minimizers, see \eqref{prop_qual_1} below, is only a necessary condition, but it becomes also sufficient if the interaction potential is LIC \cite{CS21}. Therefore, for LIC potentials, one can obtain explicit minimizers by verifying the Euler-Lagrange condition, which only requires  explicit calculation. 

The known explicit formulas for the minimizers for power-law potentials can be categorized as follows:
\begin{itemize}
	\item For $-d<b<\min\{4-d,2\}$ and $a=2$, the unique minimizer is
	\begin{equation}\label{Rx2}
		\rho(\bx) = C(R^2-|\bx|^2)_+^{1-\frac{b+d}{2}},\quad R = \Big(\frac{\Gamma(\frac{4-b}{2})\Gamma(\frac{b+d}{2})}{\Gamma(\frac{2+d}{2})}\Big)^{\frac{1}{2-b}}\,,
	\end{equation}
	and $C$ is a normalizing constant. The special case $b=2-d$ (Newtonian repulsion) was a classical result \cite{Frost}. The cases $-d<b<2-d$ were due to \cite{CV11}. For the cases $2-d<b<\min\{4-d,2\}$, this family of functions, as steady states of the energy, were obtained by \cite{CH17}. They were shown to be the minimizer by \cite{CS21,Fra}. \cite{CH17} also constructed steady states for the same range of $b$ and any positive even integer $a$, but it is not known whether they are energy minimizers.
	\item For $d\ge 2$, $2\le a \le 4$, $\frac{-10+3a+7d-a d-d^2}{d+a-3}=:b_*(a) \le b \le 2$, $b<a$, the unique minimizer is the uniform distribution on the sphere with radius
	\begin{equation}
		R = \frac{1}{2}\Big(\frac{\Gamma(\frac{d+b-1}{2})\Gamma(\frac{2d+a-2}{2})}{\Gamma(\frac{d+a-1}{2})\Gamma(\frac{2d+b-2}{2})}\Big)^{\frac{1}{a-b}}\,,
	\end{equation}
	(except for $(a,b)=(4,2)$, as discussed below). The uniform distribution on a sphere, as steady states, was first studied by \cite{BCLR13_2} by local stability analysis. They are shown to be the minimizer by \cite{FM} (see also earlier partial results in \cite{DLM1}). It is also known that these are all the power-law potentials such that LIC holds and the minimizer is the uniform distribution on a sphere \cite{FM}.
	\item For $(a,b)=(4,2)$, there are infinitely many minimizers, given by any $\rho\in\cM(\mathbb{R}^d)$ supported on a sphere of radius $\sqrt{\frac{d}{2d+2}}$, satisfying $\int_{\mathbb{R}^d}x_ix_j\rd{\rho(\bx)} = \frac{d}{2d+2}\delta_{ij}$. This was obtained by \cite{DLM2}.
	\item For $a\ge \min\{2+d,4\}$, $b\ge 2$, $b<a$, the unique minimizer (up to translation and rotation) is the sum of equal Dirac masses, located at the vertices of a unit simplex. This was obtained by \cite{DLM2}.
	\item For $d=1$, $b=2$, $2<a<3$, the unique minimizer is
	\begin{equation}
		\rho(x) = C(R^2-x^2)_+^{-\frac{a-1}{2}},\quad R = \Big( \frac{\sqrt{\pi}\Gamma(\frac{3-a}{2})\sin\big((a-1)\frac{\pi}{2}\big)}{\Gamma(\frac{4-a}{2})(a-1)\pi}\Big)^{\frac{1}{a-2}}\,,
	\end{equation}
	obtained by \cite{Fra}.
 \end{itemize}
Explicit minimizers for the anisotropic power-law potentials $W(\bx)=\frac{|\bx|^2}{2}-\frac{|\bx|^b}{b}\Omega(\frac{\bx}{|\bx|})$, where $\Omega:S^{d-1}\rightarrow \mathbb{R}_+$ is an angular profile, have also been studied extensively \cite{MRS,CMMRSV1,CMMRSV2,MMRSV1,MMRSV2,CS_2D,CS_3D}. Under suitable assumptions on $\Omega$, the unique minimizer is given by a suitable rescaling and rotation of \eqref{Rx2} or its degenerate analogs.
 
 In this paper we obtain some new explicit formulas for the energy minimizers with power-law potentials, described as follows:
 \begin{itemize}
 	\item For odd $d$ with $(a,b)=(3,2-d)$, the unique minimizer is given as in Theorem \ref{thm_odd} (see the sentence below it for a complete description of the minimizer). Although the general formula is lengthy and involves power series expressions, the minimizers in $d=1$ and $d=3$ can be expressed by elementary functions. For $d=1$, the minimizer is given by
 	\begin{equation}
 		\rho_1(x)=C\cosh(\sqrt{2}x)\chi_{|x|\le R}\,,
 	\end{equation}
 	where $R_1\approx 0.848300901770900$ is the unique positive number satisfying
 	\begin{equation}
 		\sqrt{2}R = \coth(\sqrt{2}R)\,.
 	\end{equation}
 	For $d=3$, the minimizer is given by (denoting $r=|\bx|$, $\beta = 2^{1/4}$)
 	\begin{equation}\begin{split}
 		\rho_3(\bx) = \frac{C}{r} \Big[ & \big(\cosh(\beta r)\sin(\beta r) + \sinh(\beta r)\cos(\beta r)\big) \\
 		& +  \tan(\beta R)\tanh(\beta R)\big(\cosh(\beta r)\sin(\beta r) - \sinh(\beta r)\cos(\beta r)\big)\Big]\chi_{r\le R}\,,
 	\end{split}\end{equation}
 	where $R_3\approx 0.921238965647461$ is the unique positive number satisfying
	\begin{equation}
		\cos(2\beta R)+\cosh(2\beta R)+\beta R\sin(2\beta R)-\beta R \sinh(2\beta R) = 0\,.
	\end{equation} 
 	\item For even $d$ with $(a,b)=(3,1-d)$, the unique minimizer is given as in Corollary \ref{cor_even}, which is a rescaling of the projection of the minimizer in $d+1$ dimensions with the same $(a,b)$. In particular, for the case $d=2$, the minimizer is given by
 	\begin{equation}
 		\rho_2(x_1,x_2) = \lambda^2 \int_{\mathbb{R}}\rho_3(\lambda x_1,\lambda x_2,x_3)\rd{x_3},\quad \lambda = \Big(\frac{3}{8}\Big)^{1/4}\,,
 	\end{equation}
 	which is supported on $B(0;R_2),\,R_2 = \frac{R_3}{\lambda}\approx 1.177238568926828$.
 \end{itemize}
 The explicit minimizers we obtain in dimensions $1,2,3$ are illustrated in Figure \ref{fig1}.
 
 \begin{figure}[htp!]
 	\begin{center}
 		\includegraphics[width=0.32\textwidth]{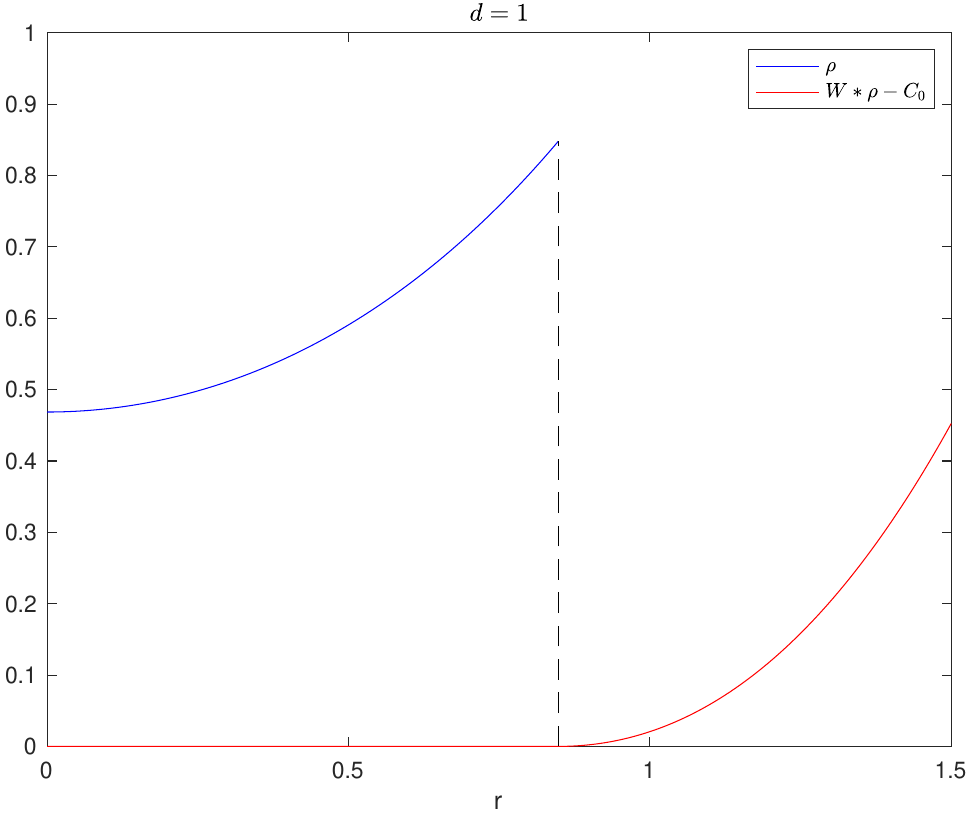}
 		\includegraphics[width=0.323\textwidth]{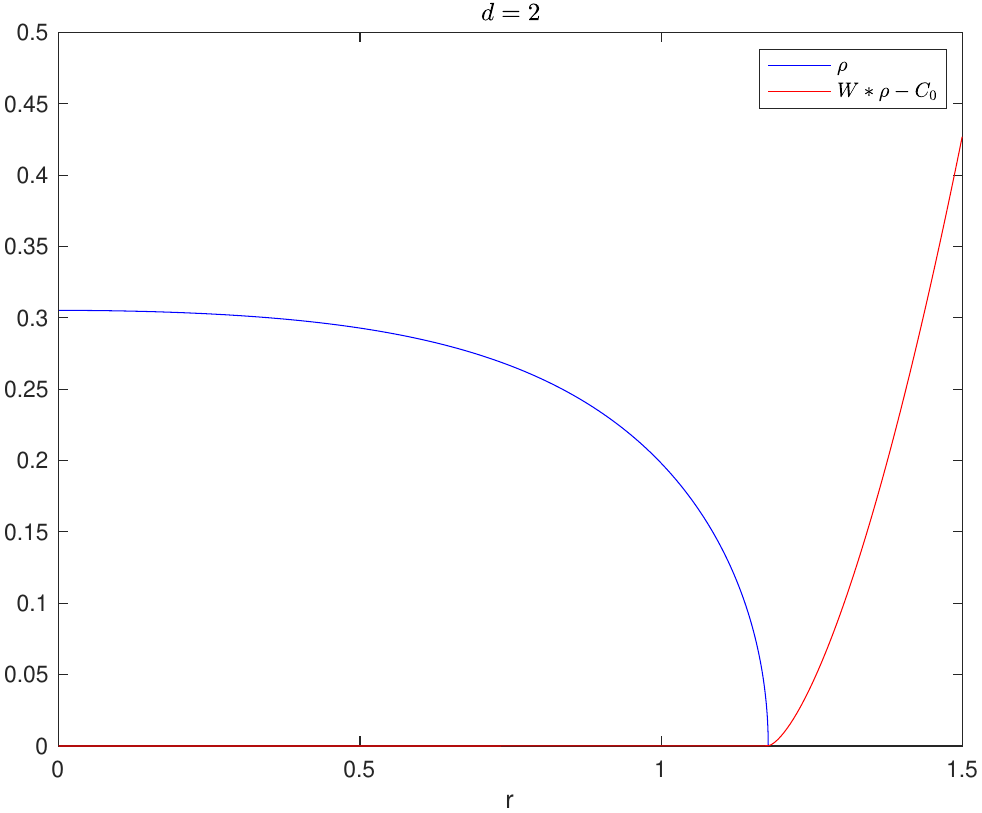}
 		\includegraphics[width=0.32\textwidth]{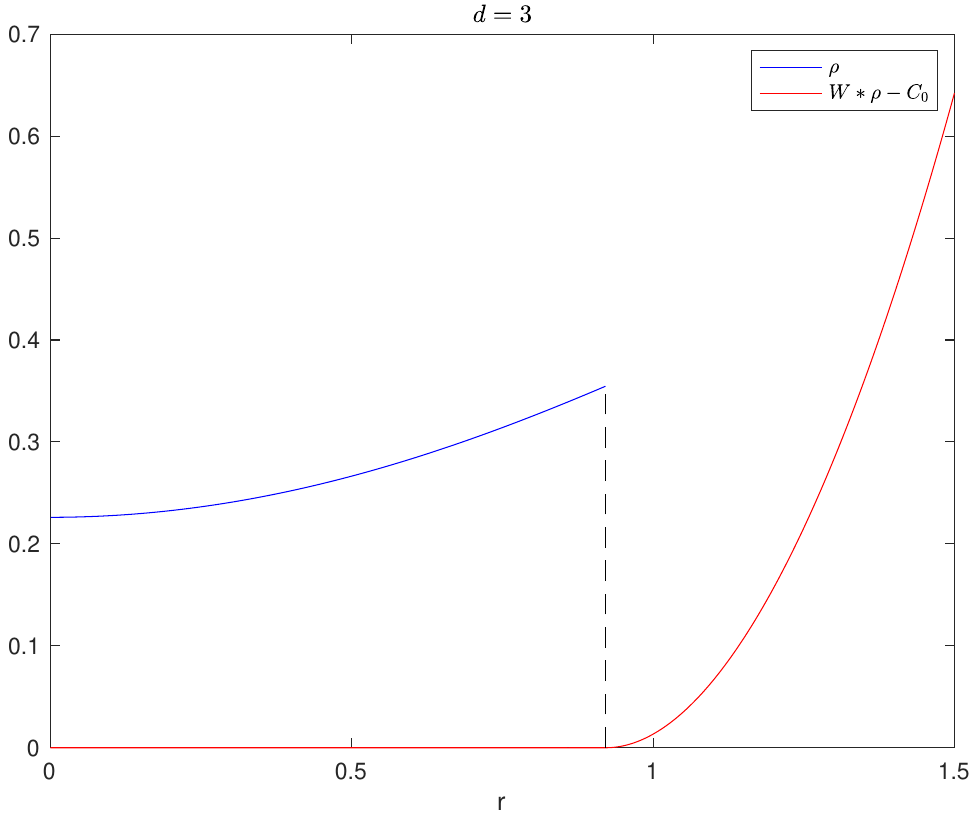}
 		\caption{The minimizers for $(d,a,b)$ being $(1,3,1)$, $(2,3,-1)$ and $(3,3,-1)$ (left, middle and right, respectively). The blue curves are the minimizers $\rho$ in the radial coordinate. The red curves are the generated potentials $W*\rho$, subtracted by the constant $C_0=\frac{1}{2}E[\rho]$.}
 		\label{fig1}
 	\end{center}	
 \end{figure}

 Since the potentials we are considering are LIC, it suffices to verify the Euler-Lagrange condition in order to justify the explicit minimizers. For the odd dimension cases, the key observation is that the Euler-Lagrange condition $W*\rho=C_0$ on $\supp\rho$, after taking successive Laplacian, becomes a local partial differential equation in $\rho$, namely, $\Delta^{(d+1)/2}\rho=A_*\rho$ for some constant $A_*$, see Section \ref{sec_odd}. This equation can be solved in the radial coordinate by power series. In this way we construct a steady state for the energy, which can be shown to be the unique minimizer via qualitative analysis in Section \ref{sec_qual}. For the even dimension case, we first give a new relation, Lemma \ref{lem_dim}, between steady states in dimensions $d$ and $d+1$ via projection. Then we apply it to power-law potentials in Section 5 to get the explicit minimizers in even dimensions.

In principle our technique can also produce `steady states' for the following two cases: (1) odd $d$ with $a-d$ and $b-d$ being even integers; (2) even $d$ with $a-d$ and $b-d$ being odd integers. Here, by `steady state' we mean a radial function $f$ supported on some ball such that $W*f=C$ on $\supp f$. However this may not always give an energy minimizer because $f$ may not be nonnegative. Even if $f$ is nonnegative, we do not know whether $f$ is a minimizer if LIC does not hold. The derivation and realistic meaning of such `steady states' deserve further investigation.

\section{Qualitative results}\label{sec_qual}

In this section we summarize and extend the known qualitative results about energy minimizers with $W$ given by \eqref{Wab}.

\begin{proposition}\label{prop_qual}
	Consider $E$ in \eqref{E} with $W$ given by \eqref{Wab} with $-d<b\le 2 \le a \le 4,\,b<a$. There exists a unique minimizer $\rho_\infty$ of $E$ up to translation. $\rho_\infty$ is compactly supported and radially symmetric. It is the only compactly supported probability measure satisfying the Euler-Lagrange condition
	\begin{equation}\label{prop_qual_1}
		W*\rho = C_0,\quad \rho\textnormal{-a.e.};\quad W*\rho \ge C_0,\quad \textnormal{a.e. }\mathbb{R}^d\,,
	\end{equation}
	for some $C_0\in\mathbb{R}$.
	
	If one further assumes $-d<b\le 2-d$, then $\rho_\infty$ is an $L^\infty$ function that is bounded from below on its support when $b=2-d$, and a $C^\alpha$ function (for any $0<\alpha<1-\frac{b+d}{2}$) when $-d<b<2-d$. $W*\rho_\infty$ is a $C^1$ function.  $\supp\rho_\infty$ is a ball. Furthermore, $\rho_\infty$ is the only radial function in the same regularity class, support being a ball, satisfying $W*\rho = C_0$ on $\supp\rho$ for some $C_0$.
\end{proposition}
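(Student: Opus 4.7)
The plan is to assemble Proposition 2.1 from the available LIC-based theory and then supply the arguments for regularity and the support-is-a-ball claim. Under the hypothesis $-d<b\le 2\le a\le 4$, $b<a$ (setting aside $(a,b)=(4,2)$, treated in \cite{DLM2}), $W$ has the LIC property by \cite{Lop19,CS21,Fra}. Combined with the existence of compactly supported minimizers from \cite{SST15,CCP15,CFT15}, LIC yields uniqueness up to translation, and rotation averaging together with LIC-uniqueness (after aligning centers of mass) gives radial symmetry. The sufficiency half of the Euler-Lagrange condition \eqref{prop_qual_1} for LIC potentials is exactly the variational lemma of \cite{CS21}: for any competitor $\tilde\rho$,
\begin{equation*}
E[\tilde\rho]-E[\rho]\ge\int(W*\rho-C_0)\,d(\tilde\rho-\rho)\ge 0,
\end{equation*}
so any compactly supported measure satisfying \eqref{prop_qual_1} is a minimizer; LIC then forces it to coincide with $\rho_\infty$.

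For the regularity claims under $-d<b\le 2-d$, the plan is to differentiate the EL equality on the interior of $\supp\rho_\infty$. When $b=2-d$, the kernel $|\bx|^b/b$ is, up to a positive constant, the Newtonian fundamental solution, so applying $-\Delta$ distributionally to $W*\rho_\infty=C_0$ yields an identity of the form $c_d\,\rho_\infty=\Delta(|\bx|^a/a)*\rho_\infty$ with $c_d>0$ on the interior of $\supp\rho_\infty$; the right-hand side is smooth and bounded, giving $\rho_\infty\in L^\infty$ with a strictly positive lower bound on its support. For $-d<b<2-d$, the analogous computation represents $\rho_\infty$ as a Riesz potential of order $2-d-b\in(0,2)$ of a smooth function, and classical H\"older estimates for Riesz potentials give $\rho_\infty\in C^\alpha$ for all $\alpha<1-\tfrac{b+d}{2}$. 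The $C^1$ regularity of $W*\rho_\infty$ then follows from standard convolution estimates: $|\bx|^a*\rho_\infty$ is $C^1$ because $a\ge 2$, while $|\bx|^b*\rho_\infty$ picks up the needed smoothness from Riesz-potential mapping properties once $\rho_\infty\in L^\infty$ (resp.\ $C^\alpha$).

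For $\supp\rho_\infty$ to be a ball, I would use radial symmetry to write $\supp\rho_\infty$ as a union of concentric shells. In the regime $-d<b\le 2-d$, $a\ge 2$ one has
\begin{equation*}
\Delta W(\bx)=(a+d-2)|\bx|^{a-2}-(b+d-2)|\bx|^{b-2}>0\qquad\text{for all }\bx\ne 0.
\end{equation*}
Suppose toward contradiction that an open annular gap $A=\{r_1<|\bx|<r_2\}$ separates two components of $\supp\rho_\infty$. Since $\rho_\infty$ is supported outside $A$, the convolution $(\Delta W)*\rho_\infty$ makes classical sense on $A$ and is strictly positive there, so $W*\rho_\infty$ is strictly subharmonic on $A$ with boundary values $C_0$; the maximum principle forces $W*\rho_\infty\le C_0$ on $A$, which combined with the EL inequality gives $W*\rho_\infty\equiv C_0$, hence $\Delta(W*\rho_\infty)\equiv 0$, on $A$, contradicting the strict positivity. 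Therefore $\supp\rho_\infty$ is a single ball. The uniqueness within the restricted radial regularity class with ball support then follows because any such $\rho$ automatically satisfies the full EL \eqref{prop_qual_1}: in the exterior of the support ball $B(0,R)$, $W*\rho$ is radial, $C^1$, strictly subharmonic with $W*\rho=C_0$ and vanishing radial derivative at $|\bx|=R$, so a radial ODE argument (no local maximum is possible by the strict subharmonicity $f''+(d-1)f'/r>0$) forces $W*\rho\ge C_0$ throughout the exterior, and LIC sufficiency then gives $\rho=\rho_\infty$.

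The main obstacle is the support-is-a-ball step: one must ensure that $W*\rho_\infty$ is regular enough up to $\partial A$ for the classical maximum principle and the pointwise Laplacian calculation to be valid, which is why the regularity claims must be established first. Handling the case $b=2-d$ separately and verifying the strictly positive lower bound of $\rho_\infty$ on its support also require some care with the distributional calculation near the support boundary.
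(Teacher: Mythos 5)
Your proposal follows essentially the same route as the paper: LIC plus the known existence results for uniqueness, radial symmetry and the sufficiency of the Euler--Lagrange condition; a maximum-principle argument on a gap component of $(\supp\rho_\infty)^c$ (where $\Delta W>0$ away from the origin) to show the support is a ball; and the radial ODE $r^{1-d}(r^{d-1}f')'>0$ with $f(R)=C_0$, $f'(R)=0$ to verify the exterior inequality and conclude uniqueness within the stated regularity class. The one place you diverge is the regularity step: the paper simply cites \cite{CDM} (Theorems 3.4 and 3.10), whereas you sketch a re-derivation via the distributional Laplacian of the Euler--Lagrange identity. That sketch has a gap you should be aware of: a priori $\rho_\infty$ is only a measure and the identity $W*\rho_\infty=C_0$ holds $\rho_\infty$-a.e., so before differentiating you need continuity of $W*\rho_\infty$ to upgrade this to an identity on an open set, and even then the relation $c_d\,\rho_\infty=\Delta(|\bx|^a/a)*\rho_\infty$ controls $\rho_\infty$ only on the \emph{interior} of its support and does not by itself exclude a singular part concentrated on $\partial\supp\rho_\infty$; these are precisely the points handled in \cite{CDM}, so either cite that result or supply the additional obstacle-problem-type argument.
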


\begin{proof}
	We first assume $-d<b\le 2 \le a \le 4,\,b<a$. The existence of minimizer and its compact support property follow from \cite[Theorem 1.4]{CCP15}. The uniqueness follows from the LIC property of $W$, which can be found in \cite{Lop19,CS21,Fra}, and the radial symmetry follows from uniqueness. The fact that the minimizer satisfies the Euler-Lagrange condition \eqref{prop_qual_1} follows from \cite[Theorem 4]{BCLR13_1}. The fact that the minimizer is the only such compactly supported probability measure follows from \cite[Theorem 2.4]{CS21}.
	
	Now we further assume $-d<b\le 2-d$. Then the regularity of the minimizer $\rho_\infty$ and $W*\rho_\infty$ follows from \cite[Theorems 3.4 and 3.10]{CDM} for $b=2-d$ and $-d<b<2-d$, respectively. For the case $b=2-d$, \cite[Theorem 3.4]{CDM} also gives the relation $\Delta\frac{|\bx|^a}{a}*\rho = |S^{d-1}|\rho$ on $\supp\rho$, which implies that $\rho$ is bounded from below on its support since $\Delta\frac{|\bx|^a}{a}$ is nonnegative, continuous, and only possibly vanishes at a single point $\bx=0$.
	
	To see that $\supp\rho_\infty$ is a ball, we only need to show that $(\supp\rho_\infty)^c$ has no bounded connected component. Assume the contrary that $S$ is such a component, then $\partial S\subset \supp\rho_\infty$, and thus $W*\rho_\infty=C_0$ on $\partial S$ by continuity of $W*\rho_\infty$. We also have $\Delta (W*\rho_\infty) > 0$ in $S$ because $\Delta W>0$ on $\mathbb{R}^d\backslash \{0\}$ for $-d<b\le 2-d$ and $ 2 \le a \le 4$. Therefore we apply the maximal principle to $W*\rho_\infty$ on $S$ to see that $\inf_S W*\rho_\infty < C_0$, contradicting \eqref{prop_qual_1}.
	
	To prove the last sentence in the statement, we assume $\rho$ is a radial function in the same regularity class, support being a ball, say, $\overline{B(0;R)}$, satisfying $W*\rho = C_0 $ on $\supp\rho$ for some $C_0$. It follows easily that $W*\rho$ is a $C^1$ function, and we have $\Delta (W*\rho) > 0$ on $\overline{B(0;R)}^c$. Denoting $(W*\rho)(\bx) = f(|\bx|)$, we switch to the radial coordinate and get
	\begin{equation}
		r^{1-d}\frac{\rd}{\rd{r}}\Big(r^{d-1}\frac{\rd{f}}{\rd{r}}\Big) > 0,\quad \text{on }(R,\infty)\,,
	\end{equation}
	with boundary condition $f(R)=C_0,\,f'(R)=0$ since $W*\rho$ is $C^1$. Then it follows that $f(r)>C_0$ for $r>R$, which gives \eqref{prop_qual_1}. Therefore $\rho$ is the minimizer.
\end{proof}

\section{The cases of odd $d$ with $(a,b)=(3,2-d)$}\label{sec_odd}

In this section we treat the odd dimensions with $b=2-d$ and $a=3$. We first study the case $d=1$ to illustrate the procedure (whose rigorous justification is delayed to the proof of Theorem \ref{thm_odd}), and then proceed to the general case.

\subsection{The case $d=1$}\label{sec_d1}

For $d=1$, the unique minimizer $\rho$ is an even function supported on $[-R,R]$ for some $R>0$. The Euler-Lagrange condition gives
\begin{equation}\label{ELd1b1_1}
	 \frac{|x|^3}{3}*\rho - |x|*\rho = C_0,\quad \text{on }(-R,R)\,.
\end{equation}
Taking Laplacian gives
\begin{equation}\label{ELd1b1_2}
	2|x|*\rho - 2\rho = 0,\quad \text{on }(-R,R)\,,
\end{equation}
and taking another Laplacian gives
\begin{equation}
	4\rho - 2\rho'' = 0,\quad \text{on }(-R,R)\,,
\end{equation}
i.e.,
\begin{equation}
	\rho'' = 2\rho,\quad \text{on }(-R,R)\,.
\end{equation}
Solving this equation with the condition $\rho'(0)=0$ (from the even property of $\rho$) gives
\begin{equation}\label{u2k_d1}
	\rho(x) = c_0\cosh(\sqrt{2}x),\quad \text{on }(-R,R)\,,
\end{equation}
where $c_0>0$. To determine $R$, we use \eqref{ELd1b1_2} at $x=0$ and get
\begin{equation}
	c_0 = \int_{-R}^R |x|\rho(x)\rd{x} = 2c_0 \int_0^R x \cosh(\sqrt{2}x) \rd{x} = c_0\Big( \sqrt{2}R\sinh(\sqrt{2}R) - \cosh(\sqrt{2}R) + 1 \Big)\,,
\end{equation}
i.e.,
\begin{equation}\label{detM_d1}
	\sqrt{2}R = \coth(\sqrt{2}R)\,.
\end{equation}
This uniquely determines $R>0$ because $\coth(\cdot)$ is decreasing on $(0,\infty)$ with $\lim_{t\rightarrow 0^+}\coth(t)=\infty$ and $\lim_{t\rightarrow \infty}\coth(t)=1$. Finally one chooses the normalization constant $c_0=\big(\int_{-R}^R\cosh(\sqrt{2}x)\rd{x}\big)^{-1}$ and gets the minimizer.

\subsection{The general case}

For a general odd dimension $d$, the unique minimizer $\rho$ is a radial function supported on $\overline{B(0;R)}$ for some $R>0$. The Euler-Lagrange condition gives
\begin{equation}\label{ELg}
	\frac{|\bx|^3}{3}*\rho - \frac{|\bx|^{2-d}}{2-d}*\rho = C_0,\quad \text{on }B(0;R)\,.
\end{equation}
Taking Laplacian gives
\begin{equation}\label{ELg1}
	(d+1)|\bx|*\rho - |S^{d-1}|\rho = 0,\quad \text{on }B(0;R)\,.
\end{equation}
Then taking Laplacian $k$ times, $k=1,2,\dots,\frac{d-1}{2}$, gives
\begin{equation}\label{ELgk}
	A_{2k}|\bx|^{1-2k}*\rho - |S^{d-1}|\Delta^k\rho = 0,\quad \text{on }B(0;R)\,,
\end{equation}
where
\begin{equation}\label{A2k}
	A_{2k} = (d+1) \big(1\cdot (-1) \cdots \cdot (3-2k)\big) \big((d-1)(d-3)\cdots(d-2k+1)\big)\,.
\end{equation}
Notice that \eqref{ELg1} can be viewed as \eqref{ELgk} with $k=0$. Taking one more Laplacian gives
\begin{equation}\label{ELg2}
	\Delta^{(d+1)/2}\rho = A_{d-1}(2-d)\rho,\quad \text{on }B(0;R)\,.
\end{equation}
Then, we switch to radial coordinate $\rho(\bx) = u(|\bx|)$ and get
\begin{equation}
	\Big(\frac{\rd^2}{\rd r^2} + \frac{d-1}{r}\frac{\rd}{\rd r}\Big)^{(d+1)/2} u = A_{d-1}(2-d)u ,\quad \text{on } (0,R)\,.
\end{equation}
We seek for the power-series solution
\begin{equation}
	u(r) = \sum_{n=0}^\infty c_{2n} r^{2n}\,,
\end{equation}
on $r\in (0,R)$. To do this, we calculate
\begin{equation}
	\Big(\frac{\rd^2}{\rd r^2} + \frac{d-1}{r}\frac{\rd}{\rd r}\Big)u = \sum_{n=0}^\infty c_{2n+2}(2n+2)(2n+d) r^{2n}\,,
\end{equation}
and thus
\begin{equation}\begin{split}
	\Big( \frac{\rd^2}{\rd r^2} + & \frac{d-1}{r}\frac{\rd}{\rd r}\Big)^{(d+1)/2}u \\ = & \sum_{n=0}^\infty c_{2n+d+1}\big((2n+2)(2n+4)\cdots(2n+d+1)\big)\\ & \cdot \big((2n+d)(2n+d+2)\cdots(2n+2d-1)\big) r^{2n}\,.
\end{split}\end{equation}
This gives the iterative relation
\begin{equation}
	c_{2n+d+1} = B_{2n} c_{2n}\,,
\end{equation}
where
\begin{equation}\label{B2n}
	B_{2n} = A_{d-1}(2-d)\Big((2n+2)(2n+4)\cdots(2n+d+1)\Big)^{-1}\Big((2n+d)(2n+d+2)\cdots(2n+2d-1)\Big)^{-1}.
\end{equation}
Therefore, combining terms in the power series of $u$ connected via this relation, we get
\begin{equation}\label{u2k}
	u(r) = \sum_{k=0}^{(d-1)/2}c_{2k} u_{2k}(r),\quad u_{2k}(r) = \sum_{n=0}^\infty \Big(\prod_{i=0}^{n-1} B_{i(d+1)+2k}\Big) r^{n(d+1)+2k} \,,
\end{equation}
where $c_{2k},\,k=0,\dots,\frac{d-1}{2}$ are $\frac{d+1}{2}$ undetermined coefficients. It is clear that the convergence radius of the power series for each $u_{2k}$ is infinity, and thus it defines a real analytic function in $r\in\mathbb{R}$. The minimizer $\rho$ will be given as
\begin{equation}\label{rhoRc}
	\rho[R;\vec{c}](\bx) = \sum_{k=0}^{(d-1)/2}c_{2k} u_{2k}(|\bx|) \chi_{|\bx|\le R}\,,
\end{equation}
where $\vec{c}=(c_0,c_2,\dots,c_{d-1})^\top$. 

Then we use \eqref{ELgk} with $k=0,\dots,\frac{d-1}{2}$ at 0 to obtain the conditions on these coefficients. We calculate
\begin{equation}\label{D2k}
	(\Delta^k \rho)(0) = c_{2k}D_{2k},\quad D_{2k}=\big((2k)(2k-2)\cdots 2\big)\big((2k+d-2)(2k+d-4)\cdots d\big)\,,
\end{equation}
and
\begin{equation}\begin{split}
	(|\bx|^{1-2k}*\rho)(0) = & \int_{B(0;R)}|\bx|^{1-2k}\rho(\bx)\rd{\bx} = |S^{d-1}|\int_0^R r^{d-2k}u(r)\rd{r} \\
	= & |S^{d-1}|\sum_{j=0}^{(d-1)/2}c_{2j}\int_0^R r^{d-2k}u_{2j}(r)\rd{r} \\
	= & |S^{d-1}|\sum_{j=0}^{(d-1)/2}c_{2j}\sum_{n=0}^\infty \Big(\prod_{i=0}^{n-1} B_{i(d+1)+2j}\Big)\frac{R^{n(d+1)+2j+d-2k+1}}{n(d+1)+2j+d-2k+1}\,.
\end{split}\end{equation}
Therefore, equating $A_{2k}(|\bx|^{1-2k}*\rho)(0)$ and  $|S^{d-1}|(\Delta^k\rho)(0)$, we get a homogeneous system of linear equations in $\vec{c}$, where the coefficient matrix $M(R)=(m_{kj}(R))$ is $\frac{d+1}{2}\times \frac{d+1}{2}$, given by
\begin{equation}\label{mkj}\begin{split}
	m_{kj}(R) = A_{2k}\sum_{n=0}^\infty \Big(\prod_{i=0}^{n-1} B_{i(d+1)+2j}\Big)\frac{R^{n(d+1)+2j+d-2k+1}}{n(d+1)+2j+d-2k+1}& - D_{2k}\delta_{kj},\\ & k,j=0,\dots,(d-1)/2\,,
\end{split}\end{equation}
with each $m_{kj}(R)$ being a function of $R$ expressed in a power series.

To admit a nontrivial solution to this system of linear equations, one needs
\begin{equation}\label{detM}
	\det M(R) = 0\,,
\end{equation}
which is a nonlinear equation in $R$. For any $R$ satisfying this equation, one can obtain solutions to $c_{2k},\,k=0,\dots,\frac{d-1}{2}$, and thus radial functions $\rho$ satisfying the Euler-Lagrange condition \eqref{ELg}. Here we remark that for the case $d=1$, \eqref{u2k} reduces to \eqref{u2k_d1}, and \eqref{detM} reduces to \eqref{detM_d1}.

Then we rigorously justify this procedure by showing that it yields the energy minimizer.
\begin{theorem}\label{thm_odd}
	Let $d$ be odd and $(a,b)=(3,2-d)$. There exists a unique positive solution $R$ to \eqref{detM}, such that the null space of $M(R)$ contains a nonzero vector $\vec{c}$ such that $\rho[R,\vec{c}]$ is nonnegative on $B(0;R)$. Furthermore, the null space of this $M(R)$ is one-dimensional. $\rho[R,\vec{c}]$, where $\vec{c}$ is a spanning vector of this null space, after normalization, is the unique minimizer of $E$ up to translation.
\end{theorem}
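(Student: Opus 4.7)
The plan is to use Proposition \ref{prop_qual} as the anchor at both ends of the argument. On the existence side, Proposition \ref{prop_qual} produces a unique minimizer $\rho_\infty$ supported on $\overline{B(0;R_\infty)}$ with $W*\rho_\infty\equiv C_0$ there and bounded below by a positive constant on its support. A bootstrap via the relations \eqref{ELgk} (each of which expresses $\Delta^k\rho_\infty$ as a convolution of $\rho_\infty$ against a locally integrable kernel) upgrades $\rho_\infty$ to a real analytic solution of \eqref{ELg2} on $B(0;R_\infty)$. Since the $(d+1)/2$ functions $u_0,u_2,\ldots,u_{d-1}$ span the space of smooth (i.e., even-power-series) radial solutions of \eqref{ELg2}, we may write $\rho_\infty=\rho[R_\infty,\vec c_\infty]$ for some $\vec c_\infty\ne 0$ with $M(R_\infty)\vec c_\infty=0$. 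Hence $\det M(R_\infty)=0$ and $\rho[R_\infty,\vec c_\infty]$ is nonnegative.

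The central new step is sufficiency: if $R>0$ and nonzero $\vec c$ satisfy $M(R)\vec c=0$ and $\rho[R,\vec c]\ge 0$ on $B(0;R)$, then $\rho[R,\vec c]$ (normalized) satisfies the Euler-Lagrange condition. To see this I would introduce the auxiliary radial functions
\[
V_k(\bx) := A_{2k}|\bx|^{1-2k}*\rho[R,\vec c]-|S^{d-1}|\Delta^k\rho[R,\vec c], \qquad k=0,1,\dots,(d-1)/2,
\]
and verify the cascade $\Delta V_k=V_{k+1}$ for $k<(d-1)/2$ using the classical identity $\Delta|\bx|^{1-2k}=(1-2k)(d-1-2k)|\bx|^{-1-2k}$ together with the algebraic consistency $A_{2k+2}=A_{2k}(1-2k)(d-1-2k)$ following from \eqref{A2k}. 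At the last step the distributional identity $\Delta|\bx|^{2-d}=(2-d)|S^{d-1}|\delta_0$ yields
\[
\Delta V_{(d-1)/2}=|S^{d-1}|\bigl(A_{d-1}(2-d)\rho[R,\vec c]-\Delta^{(d+1)/2}\rho[R,\vec c]\bigr),
\]
which vanishes on $B(0;R)$ because each $u_{2k}$ solves \eqref{ELg2}. Hence $V_{(d-1)/2}$ is radial, harmonic on $B(0;R)$, and continuous through the origin, so it is constant; the row $V_{(d-1)/2}(0)=0$ of $M(R)\vec c=0$ then forces $V_{(d-1)/2}\equiv 0$. Inducting downward gives $V_k\equiv 0$ for every $k$, and in particular $\Delta(W*\rho[R,\vec c])=V_0\equiv 0$ on $B(0;R)$. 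Applying the same radial-harmonic-is-constant reasoning to $W*\rho[R,\vec c]$ itself yields $W*\rho[R,\vec c]\equiv C_0$ on $B(0;R)$.

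Because $d$ is odd every exponent appearing in \eqref{u2k} is even, so $\rho[R,\vec c]$ is real analytic in $\bx$ on $B(0;R)$; being nontrivial, it has full support $\overline{B(0;R)}$. The last sentence of Proposition \ref{prop_qual} then identifies the normalized $\rho[R,\vec c]$ with $\rho_\infty$, forcing $R=R_\infty$ and $\vec c$ a positive scalar multiple of $\vec c_\infty$; this gives uniqueness of $R$ in \eqref{detM}. For the one-dimensionality of $\ker M(R_\infty)$, suppose $\vec c'\in\ker M(R_\infty)$ lies outside $\mathrm{span}(\vec c_\infty)$. Since $\rho_\infty$ is bounded below by a positive constant on $\overline{B(0;R_\infty)}$, the perturbation $\rho[R_\infty,\vec c_\infty+t\vec c']$ stays nonnegative for small $|t|$, and the identification step then forces $\vec c_\infty+t\vec c'\in\mathrm{span}(\vec c_\infty)$, contradicting independence.

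The principal obstacle is the sufficiency step, namely propagating the pointwise conditions $M(R)\vec c=0$ (which only test at $\bx=0$) to the global identity $W*\rho\equiv C_0$ throughout $B(0;R)$. The cascade through the $V_k$ performs this propagation, and it closes precisely because the ODE coefficient $A_{d-1}(2-d)$ in \eqref{ELg2} matches the residue from $\Delta|\bx|^{2-d}=(2-d)|S^{d-1}|\delta_0$, making the bottom-of-the-cascade obstruction $\Delta V_{(d-1)/2}$ vanish on $B(0;R)$.
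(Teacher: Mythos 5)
Your proposal is correct and follows essentially the same route as the paper: existence of $R$ from Proposition \ref{prop_qual} plus elliptic bootstrap, sufficiency by undoing the successive Laplacians (your $V_k$ cascade is exactly the induction the paper invokes, with the radial-harmonic-plus-vanishing-at-the-origin step made explicit), identification via the last sentence of Proposition \ref{prop_qual}, and one-dimensionality of the null space by perturbing with the lower bound on $\rho_\infty$. The only difference is presentational — you spell out the downward induction and the analyticity/full-support point in more detail than the paper does.
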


We remark that all the information needed to determine the minimizer $\rho$ is contained in the formulas \eqref{A2k}, \eqref{B2n}, \eqref{u2k}, \eqref{rhoRc}, \eqref{D2k}, \eqref{mkj} and \eqref{detM}.

\begin{proof}
	Due to Proposition \ref{prop_qual}, the unique minimizer $\rho$ of $E$ satisfies \eqref{ELg}. The above calculation shows that $\rho$ satisfies \eqref{ELg2} in the sense of distribution. Since we already know that $\rho\in L^\infty$ from Proposition \ref{prop_qual}, \eqref{ELg2} implies that $\rho$ is smooth in $B(0;R)$, and thus can be expressed as in \eqref{rhoRc}, with its support size $R$ and the corresponding $\vec{c}$ satisfying $M(R)\vec{c} = 0$. Therefore this $R$ solves \eqref{detM} and $\vec{c}$ is a nonzero vector in the null space of $M(R)$ whose corresponding $\rho[R;\vec{C}]=\rho$ is nonnegative on $B(0;R)$. Therefore the existence of $R$ as described is obtained.
	
	Then we prove the uniqueness of $R$. Suppose $R$ satisfies the properties as described, then we take a nonzero vector $\vec{c}$ in the null space of $M(R)$ such that $\rho[R;\vec{c}]$ is nonnegative on $B(0;R)$. Since $M(R)\vec{c}=0$, \eqref{ELgk} is satisfied at 0 for $k=0,1,\dots,\frac{d-1}{2}$. Since \eqref{ELgk} for these $k$ are successive Laplacian of \eqref{ELg}, one can use induction to show that \eqref{ELgk} for these $k$ are satisfied. As a consequence, \eqref{ELg} is satisfied. After normalization, $\rho[R;\vec{c}]$ is a probability measure which is also a radial $L^\infty$ function supported on $\overline{B(0;R)}$, satisfying the Euler-Lagrange condition \eqref{ELg}. Therefore Proposition \ref{prop_qual} (the last sentence in its statement) implies that $\rho[R;\vec{c}]$ is the unique minimizer of $E$. This proves the uniqueness of $R$.
	
	 To see that the null space of the $M$ corresponding to the prescribed $R$ is 1-dimensional, we assume the contrary. Then there exists a vector $\vec{c}\,'$ which is linearly independent to $\vec{c}$, the coefficient vector corresponding to the minimizer $\rho$. Let $\rho'=\rho[R;\vec{c}\,']$, then $\rho'$ is a radial $L^\infty$ function supported on $\overline{B(0;R)}$ that also satisfies \eqref{ELg}. Construct
	 \begin{equation}
	 	\rho_\epsilon := \Big(1-\epsilon \int_{B(0;R)}\rho'\rd{\bx}\Big)\rho + \epsilon\rho'\,,
	 \end{equation}
	 which satisfies $\int_{B(0;R)}\rho_\epsilon\rd{\bx}=1$. Notice that $\rho$ is bounded from below on $B(0;R)$ by Proposition \ref{prop_qual}, and $\rho'$ is in $L^\infty$. Therefore, for sufficiently small $\epsilon>0$, $\rho_\epsilon$ is a probability measure which is also a radial $L^\infty$ function supported on $\overline{B(0;R)}$, satisfying \eqref{ELg}, and thus has to be the unique minimizer of $E$, which is $\rho$. This leads to a contradiction since $\rho'$ is not a constant multiple of $\rho$ due to the linear independence of the corresponding coefficients.
\end{proof}

\begin{remark}
	For $d=1$ and $d=3$ we know that the positive solution $R$ to \eqref{detM} is unique, see \eqref{detM_d1} and \eqref{detM_d3} and the discussion afterwards. However, we do not know whether the same uniqueness holds in higher dimensions. Therefore, to numerically calculate the minimizer via $R$ and $\vec{c}$, one needs to find all positive solutions $R$ to \eqref{detM}, and identifies the one with the correct null space.
\end{remark}

\subsection{The case $d=3$ in elementary functions}\label{sec_d3}

For the case $d=3$, the functions $u_0(r)$ and $u_2(r)$ can be expressed by elementary functions as
\begin{equation}
	u_0(r) = \frac{1}{2\beta r}\big(\cosh(\beta r)\sin(\beta r) + \sinh(\beta r)\cos(\beta r)\big),\quad \beta = 2^{1/4}\,,
\end{equation}
and
\begin{equation}
	u_2(r) = \frac{3}{2\beta^3 r}\big(\cosh(\beta r)\sin(\beta r) - \sinh(\beta r)\cos(\beta r)\big)\,.
\end{equation}
Then we have
\begin{equation}\label{detM_d3}
	\det M(R) = 3\big(\cos(2\beta R)+\cosh(2\beta R)+\beta R\sin(2\beta R)-\beta R \sinh(2\beta R)\big)\,.
\end{equation}
The function $\det M(R)$ is decreasing on $[0,\infty)$ because 
\begin{equation}
	\phi(t) = \frac{1}{3}\det \Big(M\big(\frac{t}{\beta}\big)\Big) = \cos2t + \cosh2t + t\sin2t - t\sinh2t\,,
\end{equation} 
satisfies $\phi'(0)=0$ and $\phi''(t)=-4t(\sin2t + \sinh2t) < 0$ for $t>0$. Then, since $\det M(0) = 6 > 0$ and $\lim_{R\rightarrow\infty}\det M(R) = -\infty$, we see that $\det M(R)=0$ has a unique solution. Then we use the linear system in $\vec{c}$ to get
\begin{equation}
	\frac{c_2}{c_0} = \frac{\sqrt{2}}{3} \tan(\beta R)\tanh(\beta R)\,,
\end{equation}
and then $c_0$ can be obtained by normalization.

\section{Connection between steady states in different dimensions}

In this section we study the relation between the energy minimizers or steady states in dimensions $d$ and $d+1$. We first introduce two operators. Denote $L^1_{\textnormal{rad},\textnormal{loc}}(\mathbb{R}^d)$ as the space of locally integrable radial functions on $\mathbb{R}^d$,  and $L^1_{\textnormal{rad},R}(\mathbb{R}^d)$ as its subspace containing functions supported on $\overline{B(0;R)}$. For fixed $R>0$, define the operator $\cP: L^1_{\textnormal{rad},R}(\mathbb{R}^{d+1})\rightarrow L^1_{\textnormal{rad},R}(\mathbb{R}^d)$ by
\begin{equation}
	\cP[F](x_1,\dots,x_d) = \int_{\mathbb{R}} F(x_1,\dots,x_d,x_{d+1})\rd{x_{d+1}}\,.
\end{equation}
Define the operator $\cQ: L^1_{\textnormal{rad},\textnormal{loc}}(\mathbb{R}^d)\rightarrow L^1_{\textnormal{rad},\textnormal{loc}}(\mathbb{R}^{d+1})$ by
\begin{equation}
	\cQ[\tilde{W}](\bx) =\frac{1}{|S^d|} \int_{S^d}\tilde{w}\big(\dist(\langle \bv \rangle, \bx)\big)\rd{S(\bv)}\,,
\end{equation}
where $\tilde{w}(|\bx|)=\tilde{W}(\bx)$, and $\langle \bv \rangle$ denotes the subspace of $\mathbb{R}^{d+1}$ spanned by $\bv$. It is clear that $\cQ$ is well-defined. In fact, for $\tilde{W}\in L^1_{\textnormal{rad},\textnormal{loc}}(\mathbb{R}^d)$, if $\bv=(0,\dots,0,1)\in S^d$, then the function $\bx\mapsto\tilde{w}\big(\dist(\langle \bv \rangle, \bx)\big) = \tilde{W}(x_1,\dots,x_d)$ is a locally integrable function defined on $\mathbb{R}^{d+1}$. The same is true for general $\bv\in S^d$ due to rotational symmetry. It follows that $\cQ[\tilde{W}]\in L^1_{\textnormal{rad},\textnormal{loc}}(\mathbb{R}^{d+1})$ because it is an average of these locally integrable functions which are rotations of each other.

The ball $B(0;R)$ in $\mathbb{R}^d$ will be denoted as $B_d(0;R)$ to indicate its underlying dimension.
\begin{lemma}\label{lem_dim}
	Let $\tilde{W}\in L^1_{\textnormal{rad},\textnormal{loc}}(\mathbb{R}^d)$, $W=\cQ[\tilde{W}]$, $F\in L^1_{\textnormal{rad},R}(\mathbb{R}^{d+1})$, $\tilde{F}=\cP[F]$. Then $W*F=C$ on $\textnormal{a.e. }B_{d+1}(0;R)$ is equivalent to $\tilde{W}*\tilde{F}=C$ on $\textnormal{a.e. }B_d(0;R)$.
\end{lemma}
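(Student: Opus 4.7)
My plan is to reduce both sides of the equivalence to a single identity relating the radial profiles of $\tilde W\ast\tilde F$ and $W\ast F$, which turns out to be an Abel-type integral transform, and then to invert it.

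First I would rewrite $\cQ[\tilde W]$ as an $SO(d+1)$-average: let $P:\mathbb{R}^{d+1}\to\mathbb{R}^d$ denote projection onto the first $d$ coordinates, and observe that $\dist(\langle R e_{d+1}\rangle,\bx)=|P(R^{-1}\bx)|$, so that the definition of $\cQ$ rewrites as
\[
W(\bx)=\int_{SO(d+1)}\tilde W(P(R^{-1}\bx))\rd R
\]
with normalized Haar measure. Inserting this into $W\ast F(\bx)$, applying Fubini, and using the radial invariance of $F$ through the substitution $\by\mapsto R\by$, I obtain
\[
(W\ast F)(\bx)=\int_{SO(d+1)}\bigl((\tilde W\circ P)\ast F\bigr)(R^{-1}\bx)\rd R.
\]
A second Fubini, this time in the last coordinate, together with $\tilde F=\cP[F]$, collapses the inner convolution to $((\tilde W\circ P)\ast F)(\bz)=(\tilde W\ast\tilde F)(P(\bz))$. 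Writing $h$ and $H$ for the radial profiles of $\tilde W\ast\tilde F$ and $W\ast F$, and parameterizing $\bu\in S^d$ by its angle $\theta$ with $e_{d+1}$ so that $|P(\bu)|=\sin\theta$, the result is the key identity
\[
H(r)=\frac{2|S^{d-1}|}{|S^d|}\int_0^{\pi/2}h(r\sin\theta)\sin^{d-1}\theta\rd\theta,
\]
which by the Beta-function computation $\frac{2|S^{d-1}|}{|S^d|}\int_0^{\pi/2}\sin^{d-1}\theta\rd\theta=1$ is correctly normalized so that $h\equiv C$ yields $H\equiv C$.

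From this identity the forward direction is immediate: if $h=C$ a.e.\ on $[0,R]$, then $r\sin\theta\le r\le R$ forces the integrand to equal $C\sin^{d-1}\theta$ a.e., giving $H\equiv C$ on $[0,R]$. For the converse I would substitute $s=r\sin\theta$ and then $t=r^2$, $v=s^2$ to recast the identity as the classical Abel integral equation $\int_0^t\phi(v)(t-v)^{-1/2}\rd v=\tilde C t^{(d-1)/2}$ on $[0,R^2]$, with $\phi(v)=v^{(d-2)/2}h(\sqrt v)/2$ and $\tilde C$ a specific constant. Since this Abel operator is injective on $L^1_{\textnormal{loc}}[0,R^2]$ and the constant profile $h\equiv C$ already solves the equation by the normalization above, we conclude $h=C$ a.e.\ on $[0,R]$. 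The main obstacle will be making this Abel inversion rigorous for a merely locally integrable $h$ while keeping track of the normalization so as to recover exactly the constant $C$; both are standard consequences of the classical Abel theory combined with local integrability of the convolution $\tilde W\ast\tilde F$ (which follows from $\tilde W\in L^1_{\textnormal{rad},\textnormal{loc}}(\mathbb{R}^d)$ convolved with the compactly supported $\tilde F$).
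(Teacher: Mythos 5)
Your proposal is correct. The first half --- deriving the identity $W*F=\cQ[\tilde W*\tilde F]$ by averaging $\tilde W\circ P$ over rotations, collapsing the inner convolution with Fubini in the last coordinate, and reading off the forward implication --- is essentially the paper's argument (its equations \eqref{tildeWF} and \eqref{Qst}), just written in the radial profile variables $h,H$ rather than for the functions themselves; the same measure-zero-preimage caveat for $\theta\mapsto r\sin\theta$ applies and is handled at the same level of care as in the paper. Where you genuinely diverge is the converse, i.e.\ the injectivity of $\cQ$ restricted to radial profiles on $B_d(0;R)$. The paper proves this by duality: it pairs $\cQ[\tilde U]$ against the test functions $\phi_n(\bx)=|\bx|^{2n}(R^2-|\bx|^2)^{-1/2}\chi_{B_{d+1}(0;R)}$, computes that $\cP[\phi_n]$ is a polynomial of exact degree $n$ in $|\tilde\bx|^2$, and concludes that $\tilde U\chi_{B_d(0;R)}$ is orthogonal to all such polynomials and hence vanishes. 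You instead change variables $s=r\sin\theta$, $t=r^2$, $v=s^2$ to recognize the radial identity as an Abel equation $\int_0^t\phi(v)(t-v)^{-1/2}\rd v=\tilde C\,t^{(d-1)/2}$ and invoke injectivity of the half-order fractional integral on $L^1_{\mathrm{loc}}$ (compose with the kernel $t^{-1/2}$ once more to reduce to $\int_0^t\phi=0$). Both routes are sound; your required integrability $\phi\in L^1_{\mathrm{loc}}[0,R^2]$ follows from $\int_0^M|h(s)|s^{d-1}\rd s<\infty$, which holds because $\tilde W*\tilde F$ is locally integrable. The two arguments are in fact dual to one another --- the paper's weight $(R^2-|\bx|^2)^{-1/2}$ is the Abel kernel appearing in your change of variables --- but yours yields a self-contained inversion statement for $\cQ$ that identifies it explicitly with a classical Abel transform, while the paper's avoids any change of variables and stays entirely at the level of test functions. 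Only cosmetic quibbles remain: you overload $R$ for both the rotation and the radius, and the normalization constant in your $\phi(v)$ should be $|S^{d-1}|/|S^d|$ rather than $1/2$, though this does not affect the conclusion.
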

\begin{proof}
	Denote $\bx=(x_1,\dots,x_d,x_{d+1})$ and $\tilde{\bx}=(x_1,\dots,x_d)$. For $\bv=(0,\dots,0,1)\in S^d$,  we calculate 
	\begin{equation}\label{tildeWF}\begin{split}
		(\tilde{w}\big(\dist(\langle \bv \rangle, \cdot)\big)*F)(\bx) = & \int_{\mathbb{R}^{d+1}}\tilde{w}(|\tilde{\by}|) F(\bx-\by)\rd{\by} \\
		= & \int_{\mathbb{R}^d}\tilde{w}(|\tilde{\by}|) \int_{\mathbb{R}}F(\tilde{\bx}-\tilde{\by},x_{d+1}-y_{d+1})\rd{y_{d+1}}\rd{\tilde{\by}} \\
		= & \int_{\mathbb{R}^d}\tilde{w}(|\tilde{\by}|) \tilde{F}(\tilde{\bx}-\tilde{\by})\rd{\tilde{\by}} \\
		= &  (\tilde{W}*\tilde{F})(\tilde{\bx})\,.  \\
	\end{split}\end{equation}
	Denoting $\tilde{U}=\tilde{W}*\tilde{F}$ and $\tilde{u}(|\tilde{\bx}|)=\tilde{U}(\tilde{\bx})$, then the last quantity is $\tilde{u}(\dist(\langle \bv \rangle, \bx))$. By rotating the coordinate system, one can show that for any $\bv\in S^d$,
	\begin{equation}
		(\tilde{w}\big(\dist(\langle \bv \rangle, \cdot)\big)*F)(\bx) = \tilde{u}(\dist(\langle \bv \rangle, \bx))\,.
	\end{equation}
	Integrating in $\bv$, we get
	\begin{equation}\label{Qst}
		(W*F)(\bx) = \frac{1}{|S^d|} \int_{S^d}\tilde{u}(\dist(\langle \bv \rangle, \bx))\rd{S(\bv)} = \cQ[\tilde{W}*\tilde{F}](\bx)\,.
	\end{equation}
	
	If $\tilde{W}*\tilde{F}=C$ on $\textnormal{a.e. }B_d(0;R)$, then $\tilde{u}=C$ on $\textnormal{a.e. }[0,R]$. For any $|\bx|\le R$, we have $\dist(\langle \bv \rangle, \bx)\le R$ for any $\bv\in S^d$, and thus the above integrand is $C$ for $\textnormal{a.e. }S^d$ (here to treat the a.e. issue, we also used the fact that any preimage of a zero measure set for $\dist(\langle \cdot \rangle, \bx)$ has surface measure zero on $S^d$). It follows that $W*F=C$ on $\textnormal{a.e. }B_{d+1}(0;R)$.
	
	To prove the other direction, it suffices to show that $\cQ[\tilde{U}]=0$ on $\textnormal{a.e. }B_{d+1}(0;R)$ implies $\tilde{U}=0$ on $\textnormal{a.e. }B_d(0;R)$ for any $\tilde{U}\in  L^1_{\textnormal{rad},\textnormal{loc}}(\mathbb{R}^d)$. In fact, suppose the former, then taking a radial test function $\phi\in L^1_{\textnormal{rad},R}(\mathbb{R}^{d+1})$ satisfying $\cP[\phi]\in L^\infty(B_d(0;R))$, we have
	\begin{equation}\label{PQ}
		\int_{\mathbb{R}^{d+1}} \cQ[\tilde{U}]\phi\rd{\bx} = \int_{\mathbb{R}^d}\tilde{U}\cP[\phi]\rd{\tilde{\bx}}\,.
	\end{equation}
	This can be justified by showing that $\int_{\mathbb{R}^{d+1}}\tilde{u}\big(\dist(\langle \bv \rangle, \bx)\big)\phi(\bx) \rd{\bx} = \int_{\mathbb{R}^d}\tilde{U}\cP[\phi]\rd{\tilde{\bx}}$ for $\bv=(0,\dots,0,1)\in S^d$ similar to \eqref{tildeWF}, and then averaging in $\bv$ using rotational symmetry.
	
	Take 
	\begin{equation}
		\phi_n(\bx) = \frac{|\bx|^{2n}}{\sqrt{R^2-|\bx|^2}}\chi_{B_{d+1}(0;R)},\quad n=0,1,2,\dots\,.
	\end{equation}
	Then for $|\tilde{\bx}|<R$,
	\begin{equation}
		\cP[\phi_n](\tilde{\bx}) = \int_{-\sqrt{R^2-|\tilde{\bx}|^2}}^{\sqrt{R^2-|\tilde{\bx}|^2}}\frac{|\bx|^{2n}}{\sqrt{R^2-|\tilde{\bx}|^2-x_{d+1}^2}}\rd{x_{d+1}}\,.
	\end{equation}
	Notice that $|\bx|^{2n} = (|\tilde{\bx}|^2+x_{d+1}^2)^n = \sum_{k=0}^n{n\choose k}|\tilde{\bx}|^{2(n-k)}x_{d+1}^{2k}$, and we have
	\begin{equation}
		\int_{-\sqrt{R^2-|\tilde{\bx}|^2}}^{\sqrt{R^2-|\tilde{\bx}|^2}}\frac{x_{d+1}^{2k}}{\sqrt{R^2-|\tilde{\bx}|^2-x_{d+1}^2}}\rd{x_{d+1}} = (R^2-|\tilde{\bx}|^2)^{k}\int_{-1}^1 \frac{x^{2k}\rd{x}}{\sqrt{1-x^2}}\,.
	\end{equation}
	Therefore 
	\begin{equation}
	\cP[\phi_n](\tilde{\bx}) = \chi_{B_d(0;R)}\sum_{k=0}^n{n\choose k}\int_{-1}^1 \frac{x^{2k}\rd{x}}{\sqrt{1-x^2}}\cdot |\tilde{\bx}|^{2(n-k)}(R^2-|\tilde{\bx}|^2)^{k}\,.
	\end{equation}
	This is a polynomial of degree at most $n$ in $|\tilde{\bx}|^2$, and in fact its degree is $n$ because the leading coefficient is
	\begin{equation}
		  \sum_{k=0}^n{n\choose k}\int_{-1}^1 \frac{x^{2k}\rd{x}}{\sqrt{1-x^2}} \cdot (-1)^k = \int_{-1}^1 \sqrt{1-x^2}\rd{x} = \frac{\pi}{2}\ne 0\,.
	\end{equation}
	Therefore, $\phi_n$ is an eligible test function. If $\cQ[\tilde{U}]=0$, then it follows from \eqref{PQ} that $\int_{B_d(0;R)}\tilde{U} \cP[\phi_n]\rd{\tilde{\bx}} = 0$, i.e., $\tilde{U}\chi_{B_d(0;R)}$ is orthogonal to all polynomials of $|\tilde{\bx}|^2$. This implies $\tilde{U}=0$ on $\textnormal{a.e. }B_d(0;R)$.
\end{proof}

\begin{remark}
	It is worthwhile to point out the following fact: In Lemma \ref{lem_dim}, if one instead assumes that $\tilde{W}*\tilde{F}\ge C$ on $\textnormal{a.e. }\mathbb{R}^d$, then  $W*F\ge C$ on $\textnormal{a.e. }\mathbb{R}^{d+1}$. This is a direct consequence of \eqref{Qst}. Therefore, the Euler-Lagrange condition \eqref{prop_qual_1} for $(\tilde{W},\tilde{F})$ implies that for $(W,F)$, provided that $W*F$ is continuous.
	
	Although this fact does not play a role in this paper, we believe that it is important and may serve as a tool to find higher dimensional minimizers from lower dimensional ones. In fact, a similar idea was used in \cite{CS_2D,CS_3D} to obtain the explicit minimizers for some anisotropic interaction energies.
\end{remark}

\section{The cases of even $d$ with $(a,b)=(3,1-d)$}

\begin{lemma}\label{lem_Q}
	Let $\tilde{W}(\tilde{\bx})=\frac{|\tilde{\bx}|^a}{a},\,a>-d$ in $\mathbb{R}^d$. Then
	\begin{equation}
		\cQ[\tilde{W}](\bx) = \eta_{d,a} \frac{|\bx|^a}{a},\quad \textnormal{if }a\ne 0\,,
	\end{equation}
	where
	\begin{equation}
		\eta_{d,a} =  \frac{\Gamma(\frac{a+d}{2})\Gamma(\frac{d+1}{2})}{\Gamma(\frac{a+d+1}{2})\Gamma(\frac{d}{2})}\,.
	\end{equation}
	For the case $a=0$, there holds
	\begin{equation}
	\cQ[\ln|\tilde{\bx}|](\bx) = \ln|\bx| + C_{d,\ln},\quad C_{d,\ln} =  \frac{|S^{d-1}|}{|S^d|} \int_0^\pi \sin^{d-1}\phi\ln(\sin\phi)\rd{\phi}\,.
	\end{equation}
\end{lemma}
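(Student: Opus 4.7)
The proof is essentially a direct computation once one reduces the spherical integral to a zonal one. My plan is as follows.

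By the rotational invariance built into the definition of $\cQ$, it suffices to evaluate $\cQ[\tilde{W}](\bx)$ at a single point $\bx=(0,\dots,0,r)$ with $r=|\bx|$. For such $\bx$ and a unit vector $\bv\in S^d$ making angle $\theta\in[0,\pi]$ with the $(d+1)$st axis, the distance from $\bx$ to the line $\langle\bv\rangle$ is $\sqrt{|\bx|^2-(\bx\cdot\bv)^2}=r\sin\theta$. Writing $\bv=(\sin\theta\,\bu,\cos\theta)$ with $\bu\in S^{d-1}$, the surface element on $S^d$ factors as $\sin^{d-1}\theta\,\rd\theta\,\rd S(\bu)$, so any zonal integrand reduces by
\begin{equation}
\int_{S^d} f(\theta)\,\rd S(\bv) = |S^{d-1}|\int_0^\pi f(\theta)\sin^{d-1}\theta\,\rd\theta.
\end{equation}

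Applying this with $f(\theta)=\tilde{w}(r\sin\theta)=\frac{r^a\sin^a\theta}{a}$ for the case $a\ne 0$ gives
\begin{equation}
\cQ[\tilde{W}](\bx) = \frac{r^a}{a}\cdot\frac{|S^{d-1}|}{|S^d|}\int_0^\pi \sin^{a+d-1}\theta\,\rd\theta.
\end{equation}
Next I would evaluate the trigonometric integral as a Beta function, $\int_0^\pi\sin^{a+d-1}\theta\,\rd\theta = B\!\left(\tfrac{a+d}{2},\tfrac12\right)=\frac{\sqrt{\pi}\,\Gamma(\frac{a+d}{2})}{\Gamma(\frac{a+d+1}{2})}$ (convergent precisely because $a>-d$), and combine with $\frac{|S^{d-1}|}{|S^d|}=\frac{\Gamma((d+1)/2)}{\sqrt{\pi}\,\Gamma(d/2)}$. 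The $\sqrt{\pi}$ factors cancel and the claimed formula for $\eta_{d,a}$ pops out.

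For the case $a=0$, the logarithm splits the integrand as $\ln(r\sin\theta)=\ln r+\ln\sin\theta$. The first piece, after applying the zonal reduction, gives a multiplicative factor $\frac{|S^{d-1}|}{|S^d|}\int_0^\pi\sin^{d-1}\theta\,\rd\theta$, which equals $\eta_{d,0}=1$ by the computation just done (or directly from $\int_0^\pi\sin^{d-1}\theta\,\rd\theta=\frac{\sqrt{\pi}\Gamma(d/2)}{\Gamma((d+1)/2)}$). The second piece is the stated constant $C_{d,\ln}$, producing $\cQ[\ln|\tilde{\bx}|](\bx)=\ln|\bx|+C_{d,\ln}$.

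There is no real obstacle here: the only care points are (i) confirming the geometric identity $\dist(\langle\bv\rangle,\bx)=|\bx|\sin\theta$, (ii) the zonal reduction of the $S^d$ integral, and (iii) convergence of $\int_0^\pi\sin^{a+d-1}\theta\,\rd\theta$, which is exactly guaranteed by $a>-d$. Everything else is bookkeeping with the Gamma function.
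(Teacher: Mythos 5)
Your proof is correct and follows essentially the same route as the paper: reduce to a point on the axis, observe $\dist(\langle\bv\rangle,\bx)=r\sin\theta$, reduce the $S^d$ integral to a zonal one giving $\frac{|S^{d-1}|}{|S^d|}\int_0^\pi\sin^{a+d-1}\theta\,\rd\theta$, and evaluate via Beta/Gamma functions (the paper phrases the geometric step as $\dist(\langle\bv\rangle,\vec{e}_{d+1})=\dist(\bv,\langle\vec{e}_{d+1}\rangle)=|\tilde{\bv}|$, but this is the same identity). The $a=0$ case is also handled identically by splitting the logarithm.
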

\begin{proof}
	Denote $\vec{e}_{d+1}=(0,\dots,0,1)\in\mathbb{R}^{d+1}$. Then, if $a\ne 0$, we have
	\begin{equation}\begin{split}
		\cQ[\tilde{W}](r\vec{e}_{d+1}) = & \frac{1}{a|S^d|} \int_{S^d}\big(\dist(\langle \bv \rangle, r\vec{e}_{d+1})\big)^a\rd{S(\bv)} 
		= \frac{r^a}{a|S^d|} \int_{S^d}\big(\dist(\langle \bv \rangle, \vec{e}_{d+1})\big)^a\rd{S(\bv)} \\
		= & \frac{r^a}{a|S^d|} \int_{S^d}\big(\dist( \bv , \langle \vec{e}_{d+1} \rangle)\big)^a\rd{S(\bv)} 
		=  \frac{r^a}{a|S^d|} \int_{S^d}|\tilde{\bv}|^a\rd{S(\bv)} \\
		= & \frac{r^a|S^{d-1}|}{a|S^d|} \int_0^\pi \sin^{a+d-1}\phi\rd{\phi}
		=  \frac{r^a|S^{d-1}|}{a|S^d|}\cdot \frac{\Gamma(\frac{a+d}{2})\Gamma(\frac{1}{2})}{\Gamma(\frac{a+d+1}{2})}\\
		= & \frac{r^a}{a}\cdot \frac{\Gamma(\frac{a+d}{2})\Gamma(\frac{d+1}{2})}{\Gamma(\frac{a+d+1}{2})\Gamma(\frac{d}{2})}\,,\\
	\end{split}\end{equation}
	using the spherical coordinates $\bv = (\bv'\sin\phi, \cos\phi),\,\tilde{\bv} = \bv'\sin\phi$.
	
	For the case $a=0$, a similar calculation gives
	\begin{equation}\begin{split}
		\cQ[\ln|\tilde{\bx}|](r\vec{e}_{d+1}) = & \frac{1}{|S^d|} \int_{S^d}\ln\big(\dist(\langle \bv \rangle, r\vec{e}_{d+1})\big)\rd{S(\bv)} 
		= \ln r + \frac{1}{|S^d|} \int_{S^d}\ln|\tilde{\bv}|\rd{S(\bv)} \\
		= & \ln r + \frac{|S^{d-1}|}{|S^d|} \int_0^\pi \sin^{d-1}\phi\ln(\sin\phi)\rd{\phi}\,.
\end{split}\end{equation}
\end{proof}

\begin{theorem}\label{thm_even}
	Let $-d<b\le 1-d$ and $2\le a\le 4$, $W$ be given by \eqref{Wab} in $\mathbb{R}^{d+1}$, and $\bar{W}$ given by \eqref{Wab} in $\mathbb{R}^d$. Denote $E$ and $\bar{E}$ as the energies associated to $W$ and $\bar{W}$ respectively. Let $\rho\in \cM(\mathbb{R}^{d+1})$ be the unique minimizer of $E$ whose support is $\overline{B_{d+1}(0;R)}$. Then the unique minimizer (up to translation) of $\bar{E}$ is
	\begin{equation}\label{thm_even_1}
		\bar{\rho}(\bx) = \lambda^d\cP[\rho](\lambda \bx),\quad \lambda = \Big(\frac{\eta_{d,a}}{\eta_{d,b}}\Big)^{1/(a-b)}\,,
	\end{equation}
	which is supported on $\overline{B_d(0;\frac{R}{\lambda})}$.
\end{theorem}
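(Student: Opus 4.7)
The plan is to verify that the candidate $\bar{\rho}$ in \eqref{thm_even_1} meets the hypotheses of the uniqueness statement at the end of Proposition \ref{prop_qual} for the potential $\bar{W}$ on $\mathbb{R}^{d}$. Since $-d<b\le 1-d<2-d$, that statement characterizes the minimizer of $\bar{E}$ as the unique radial $C^{\alpha}$ function (for any $0<\alpha<1-\tfrac{b+d}{2}$), supported on a closed ball, satisfying the Euler-Lagrange equation on its support; so it suffices to check that $\bar{\rho}$ has these properties.

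The basic properties are quick. Since $\rho$ is a radial probability measure supported on $\overline{B_{d+1}(0;R)}$, Fubini gives that $\cP[\rho]$ is a radial probability measure on $\overline{B_{d}(0;R)}$, and the rescaling in \eqref{thm_even_1} makes $\bar{\rho}$ a radial probability measure on $\overline{B_{d}(0;R/\lambda)}$. For the H\"older regularity, radial symmetry of $\rho=\rho_{\rm rad}(|\bx|)$ yields the Abel-type representation
\begin{equation*}
    \cP[\rho](\tilde{\bx})=2\int_{|\tilde{\bx}|}^{R}\rho_{\rm rad}(s)\,\frac{s}{\sqrt{s^{2}-|\tilde{\bx}|^{2}}}\,\rd s\,,
\end{equation*}
from which a short Beta-integral computation converts an $(R-s)^{\alpha'}$ bound on $\rho_{\rm rad}$ near $s=R$ (with $\alpha'=0$ when $b=1-d$, by the boundedness-from-below clause of Proposition \ref{prop_qual}, and with any $\alpha'<1-\tfrac{b+d+1}{2}$ when $-d<b<1-d$) into an $(R-r)^{\alpha'+1/2}$ bound on $\cP[\rho]$. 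Combined with interior smoothness of $\rho$ coming from Proposition \ref{prop_qual} (and Section \ref{sec_odd} when $b=1-d$), this places $\bar{\rho}$ in the H\"older class required on $\mathbb{R}^{d}$.

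The heart of the argument is the Euler-Lagrange equation. Define the auxiliary potential $\tilde{W}(\tilde{\bx})=\frac{|\tilde{\bx}|^{a}}{\eta_{d,a}\,a}-\frac{|\tilde{\bx}|^{b}}{\eta_{d,b}\,b}$ on $\mathbb{R}^{d}$, so that Lemma \ref{lem_Q} together with linearity of $\cQ$ gives $\cQ[\tilde{W}]=W$. Proposition \ref{prop_qual} supplies $W*\rho=C_{0}$ on $B_{d+1}(0;R)$, and Lemma \ref{lem_dim} applied with $F=\rho$ then yields $\tilde{W}*\cP[\rho]=C_{0}$ on $B_{d}(0;R)$. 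The exponent $\lambda=(\eta_{d,a}/\eta_{d,b})^{1/(a-b)}$ is precisely the value for which
\begin{equation*}
    \bar{W}(\bw/\lambda)=\frac{|\bw|^{a}}{\lambda^{a}\,a}-\frac{|\bw|^{b}}{\lambda^{b}\,b}=C'\,\tilde{W}(\bw),\qquad C':=\eta_{d,a}/\lambda^{a}=\eta_{d,b}/\lambda^{b}\,,
\end{equation*}
the coincidence of the two ratios being enforced exactly by $\lambda^{a-b}=\eta_{d,a}/\eta_{d,b}$. The change of variables $\by\mapsto\by/\lambda$ in the convolution then gives $(\bar{W}*\bar{\rho})(\bx)=C'(\tilde{W}*\cP[\rho])(\lambda\bx)=C'C_{0}$ for $|\bx|\le R/\lambda$, i.e., the Euler-Lagrange equation on $\supp\bar{\rho}$. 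Proposition \ref{prop_qual} then identifies $\bar{\rho}$ as the unique minimizer of $\bar{E}$ up to translation.

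The main obstacle I anticipate is the H\"older estimate for $\cP[\rho]$ in the range $-d<b<1-d$, where the required regularity exceeds $C^{1/2}$ and one genuinely needs the full $+\tfrac{1}{2}$ improvement from the Abel transform. The algebraic matching of $\bar{W}$ against $\tilde{W}$, by contrast, is structurally rigid and forces the formula for $\lambda$: a single free scale parameter has to equate two ratios of coefficients, which is possible if and only if $\lambda^{a-b}=\eta_{d,a}/\eta_{d,b}$.
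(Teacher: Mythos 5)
Your proposal is correct and follows essentially the same route as the paper: normalize the power-law pieces by $\eta_{d,a},\eta_{d,b}$ to build $\tilde{W}$ with $\cQ[\tilde{W}]=W$ (Lemma \ref{lem_Q}), push the Euler--Lagrange identity $W*\rho=C_0$ down to $\tilde{W}*\cP[\rho]=C_0$ on $B_d(0;R)$ via Lemma \ref{lem_dim}, rescale by the unique $\lambda$ with $\lambda^{a-b}=\eta_{d,a}/\eta_{d,b}$, and invoke the uniqueness clause of Proposition \ref{prop_qual}. Two remarks. First, your Abel-transform verification that $\cP[\rho]$ lands in the $C^\alpha$ class required on $\mathbb{R}^d$ (gaining the exponent $1/2$ so that $1-\frac{b+(d+1)}{2}+\frac12=1-\frac{b+d}{2}$) is actually \emph{more} careful than the paper, which only records continuity of $\cP[\rho]$ before citing the last sentence of Proposition \ref{prop_qual}; since for $b\le 1-d$ the gradient $|\bx|^{b-1}$ is not locally integrable, some H\"older regularity of $\bar\rho$ is genuinely needed to get $\bar{W}*\bar\rho\in C^1$, so this is a worthwhile addition rather than overkill. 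Second, you overlook the logarithmic endpoint $b=0$ (admissible only when $d=1$): there Lemma \ref{lem_Q} gives $\cQ[\ln|\cdot|]=\ln|\cdot|+C_{d,\ln}$ and $\ln|\bw/\lambda|=\ln|\bw|-\ln\lambda$, so your identities $\cQ[\tilde W]=W$ and $\bar W(\bw/\lambda)=C'\tilde W(\bw)$ hold only up to additive constants; this is harmless for an Euler--Lagrange condition of the form ``$=$ const'' but deserves the separate sentence the paper gives it.
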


\begin{proof}
	We first assume $b\ne 0$. By Proposition \ref{prop_qual}, $W*\rho=C$ on $B_{d+1}(0;R)$. The Lemma \ref{lem_Q} shows that
	\begin{equation}\label{Weta}
		\tilde{W}(\bx) = \eta_{d,a}^{-1}\frac{|\bx|^a}{a}-\eta_{d,b}^{-1}\frac{|\bx|^b}{b}\,,	\end{equation}
	defined on $\mathbb{R}^d$ satisfies $\cQ[\tilde{W}] = W$. Then we apply Lemma \ref{lem_dim} to obtain $\tilde{W}*\cP[\rho]=C$ on $\textnormal{a.e. }B_d(0;R)$. Since $\rho$ is a bounded function which is continuous on $B_{d+1}(0;R)$, it is easy to show that $\cP[\rho]$ is continuous on $\mathbb{R}^d$. This implies the continuity of $\tilde{W}*\cP[\rho]$ on $\mathbb{R}^d$, and thus $\tilde{W}*\cP[\rho]=C$ on $B_d(0;R)$. Then we notice the scaling relation
	\begin{equation}\begin{split}
		(\bar{W}*\bar{\rho})(\lambda^{-1}\bx) = & \int_{\mathbb{R}^d} \Big(\frac{|\lambda^{-1}\bx-\by|^a}{a} - \frac{|\lambda^{-1}\bx-\by|^b}{b}\Big)  \lambda^d\cP[\rho](\lambda \by)\rd{\by} \\
		= & \int_{\mathbb{R}^d} \Big(\lambda^{-a}\frac{|\bx-\by|^a}{a} - \lambda^{-b}\frac{|\bx-\by|^b}{b}\Big)  \cP[\rho]( \by)\rd{\by} \\
		= & \left(\Big(\lambda^{-a}\frac{|\cdot|^a}{a} - \lambda^{-b}\frac{|\cdot|^b}{b}\Big)*  \cP[\rho]\right)(\bx)\,.
	\end{split}\end{equation}
	By choosing $\lambda$ such that
	\begin{equation}
		\lambda^{-a}\eta_{d,b}^{-1} = \lambda^{-b}\eta_{d,a}^{-1}\,,
	\end{equation}
	i.e., the one given by \eqref{thm_even_1}, we see that $\lambda^{-a}\frac{|\cdot|^a}{a} - \lambda^{-b}\frac{|\cdot|^b}{b}$ is a scalar multiple of $\tilde{W}$, and thus the last quantity is constant on $B_d(0;R)$. This implies $\bar{W}*\bar{\rho}$ is constant on $B_d(0;\frac{R}{\lambda})$. Then applying Proposition \ref{prop_qual} (the last sentence in the statement) shows that $\bar{\rho}$ is the minimizer of $\bar{E}$. 
	
	The case $b=0$ can be treated similarly. In fact, we notice that $\eta_{d,0}=1$, and thus Lemma \ref{lem_Q} gives $\cQ[\tilde{W}]=W-C_{d,\ln}$ where $\tilde{W}$ is given by \eqref{Weta}. This implies $\tilde{W}*\cP[\rho]=C-C_{d,\ln}$ on $\textnormal{a.e. }B_d(0;R)$. Then we notice the scaling relation
	\begin{equation}\begin{split}
		(\bar{W}*\bar{\rho})(\lambda^{-1}\bx) = & \int_{\mathbb{R}^d} \Big(\frac{|\lambda^{-1}\bx-\by|^a}{a} - \ln|\lambda^{-1}\bx-\by|\Big)  \lambda^d\cP[\rho](\lambda \by)\rd{\by} \\
		= & \int_{\mathbb{R}^d} \Big(\lambda^{-a}\frac{|\bx-\by|^a}{a} - \ln|\bx-\by| + \ln\lambda\Big)  \cP[\rho]( \by)\rd{\by} \\
		= & \left(\Big(\lambda^{-a}\frac{|\cdot|^a}{a} - \ln|\cdot|\Big)*  \cP[\rho]\right)(\bx) + \ln\lambda\,,
	\end{split}\end{equation}
	and we again obtain that $\bar{W}*\bar{\rho}$ is constant on $B_d(0;\frac{R}{\lambda})$ by the same choice of $\lambda$, and see that $\bar{\rho}$ is the minimizer of $\bar{E}$.
\end{proof}

Then applying Theorem \ref{thm_even} to the case of even $d$ with $(a,b)=(3,1-d)$, we can express the associated energy minimizer as the projection and rescaling of the minimizer in dimension $d+1$ obtained in Theorem \ref{thm_odd}.
\begin{corollary}\label{cor_even}
	Let $d$ be even, $b=1-d$, $a=3$, and $\tilde{E}$ as the energy with potential $\tilde{W}$ given by \eqref{Wab} in $\mathbb{R}^d$. Denote $E$ as the energy with potential $W$ given by \eqref{Wab} in $\mathbb{R}^{d+1}$, and its unique minimizer  $\rho$ given by Theorem \ref{thm_odd}, supported on $\overline{B_{d+1}(0;R)}$. Then the unique minimizer $\tilde{\rho}$ of $\tilde{E}$  (up to translation) is given by \eqref{thm_even_1}, i.e.,
	\begin{equation}
		\tilde{\rho}(x_1,\dots,x_d) = \lambda^d \int_{\mathbb{R}}\rho(\lambda x_1,\dots,\lambda x_d,x_{d+1})\rd{x_{d+1}},\quad \lambda = \Big(\frac{\Gamma(\frac{3+d}{2})}{\Gamma(\frac{4+d}{2})\sqrt{\pi}}\Big)^{1/(d+2)}\,,
	\end{equation}
	which is supported on $\overline{B_d(0;\frac{R}{\lambda})}$.
\end{corollary}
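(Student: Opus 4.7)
The plan is to recognize that Corollary \ref{cor_even} is an immediate specialization of Theorem \ref{thm_even} to the parameter regime treated by Theorem \ref{thm_odd}, so the work reduces to (i) verifying that the hypotheses of Theorem \ref{thm_even} are satisfied and (ii) simplifying the rescaling constant $\lambda$ to the stated closed form.

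First I would verify the hypotheses. Since $d$ is even, the ambient dimension $d+1$ is odd, and the parameters $(a,b) = (3, 1-d)$ viewed in $\mathbb{R}^{d+1}$ coincide with $(a,b) = (3, 2-(d+1))$. This is exactly the regime of Theorem \ref{thm_odd}, which therefore supplies a unique (up to translation) minimizer $\rho$ of $E$, supported on $\overline{B_{d+1}(0;R)}$, with the radial $L^\infty$ regularity needed downstream. On the $\mathbb{R}^d$ side, the condition $-d < b \le 1-d$ of Theorem \ref{thm_even} is satisfied with equality by $b = 1-d$, and $a = 3 \in [2,4]$. All hypotheses of Theorem \ref{thm_even} are thus in place, so that theorem yields that the unique minimizer of $\tilde{E}$ is $\tilde\rho(\bx) = \lambda^d \cP[\rho](\lambda\bx)$, supported on $\overline{B_d(0;R/\lambda)}$, with $\lambda = (\eta_{d,a}/\eta_{d,b})^{1/(a-b)}$.

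It then remains to evaluate $\lambda$ explicitly. Using the formula for $\eta_{d,\cdot}$ from Lemma \ref{lem_Q}, specializing to $a = 3$ gives $\eta_{d,3} = \Gamma(\tfrac{3+d}{2})\Gamma(\tfrac{d+1}{2})/(\Gamma(\tfrac{4+d}{2})\Gamma(\tfrac{d}{2}))$, while specializing to $b = 1-d$ gives $\eta_{d,1-d} = \Gamma(\tfrac{1}{2})\Gamma(\tfrac{d+1}{2})/(\Gamma(1)\Gamma(\tfrac{d}{2})) = \sqrt{\pi}\,\Gamma(\tfrac{d+1}{2})/\Gamma(\tfrac{d}{2})$. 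The $\Gamma(\tfrac{d+1}{2})/\Gamma(\tfrac{d}{2})$ factors cancel, leaving $\eta_{d,3}/\eta_{d,1-d} = \Gamma(\tfrac{3+d}{2})/(\Gamma(\tfrac{4+d}{2})\sqrt{\pi})$, and since $a-b = 3-(1-d) = d+2$, raising this ratio to the $1/(d+2)$ power produces precisely the $\lambda$ recorded in the statement.

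There is no genuine obstacle here: the conceptual content resides entirely in Theorems \ref{thm_odd} and \ref{thm_even} (together with the dimension-reduction Lemma \ref{lem_dim}), and the corollary only asks one to check that the specific exponents $(a,b) = (3, 1-d)$ for even $d$ sit simultaneously in both hypothesis ranges and to carry out a routine simplification of a gamma-function quotient.
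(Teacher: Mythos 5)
Your proof is correct and matches the paper's own treatment: the corollary is obtained by applying Theorem \ref{thm_even} with $(a,b)=(3,1-d)$ to the odd-dimensional minimizer from Theorem \ref{thm_odd}, and the value of $\lambda$ follows from the gamma-function simplification $\eta_{d,3}/\eta_{d,1-d}=\Gamma(\tfrac{3+d}{2})/(\Gamma(\tfrac{4+d}{2})\sqrt{\pi})$ with $a-b=d+2$, exactly as you compute.
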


\bibliographystyle{alpha}
\bibliography{minimizer_book_bib.bib}

\end{document}